\documentclass[11pt,a4paper]{amsart}

\usepackage{amsmath, amsthm, amssymb, graphicx, color, euscript, appendix,enumitem,microtype,hyperref,mathtools, parskip}

\newtheorem{theorem}{Theorem}
\newtheorem{lemma}{Lemma}[section]
\newtheorem{corollary}[lemma]{Corollary}
\newtheorem*{question}{Question}
\newtheorem*{lifting lemma}{Lifting Lemma}
\newtheorem*{splitting lemma}{Splitting Lemma}
\newtheorem*{acknowledgment}{Acknowledgment}
\newtheorem*{Theorem 1 with parameters}{Theorem 1 with parameters}
\newtheorem*{Theorem 2 with parameters}{Theorem 2 with parameters}

\theoremstyle{definition}
\newtheorem{definition}[lemma]{Definition}

\theoremstyle{remark}
\newtheorem{remark}[lemma]{Remark}
\newtheorem{example}[lemma]{Example}

\newcommand{\C}{\mathbb{C}} 
 
\newcommand{\N}{\mathbb{N}}

\newcommand{\B}{\mathbb{B}}

\newcommand{\cC}{{\ensuremath{\mathcal{C}}}}
\newcommand{\cE}{{\ensuremath{\mathcal{E}}}}
\newcommand{\cF}{{\ensuremath{\mathcal{F}}}}
\newcommand{\cG}{{\ensuremath{\mathcal{G}}}}

\newcommand{\cO}{{\ensuremath{\mathcal{O}}}}

\newcommand{\Ueul}{\EuScript{U}}

\DeclareMathOperator{\Id}{Id}
\DeclareMathOperator{\pr}{pr}

\setlength{\parskip}{0pt}

\begin{document}
\title{A splitting lemma for coherent sheaves}
\author{Luca Studer}
\email{luca.studer@math.unibe.ch}
\begin{abstract}
The presented splitting lemma extends the techniques of Gromov and Forstneri\v c 
to glue local sections of a given analytic sheaf, a key step in the proof of all Oka principles. 
The novelty on which the proof depends is a lifting lemma for transition maps of coherent sheaves, 
which yields a reduction of the proof to the work of Forstneri\v c. As applications we get shortcuts in the 
proofs of Forster and Ramspott's Oka principle for admissible pairs and of the interpolation property of sections of elliptic submersions, an extension of Gromov's results obtained by 
Forstneri\v c and Prezelj. 
\end{abstract} 

\maketitle

\section{introduction}
Splitting lemmata are often the crux in the proof of an Oka principle. It was Gromov's idea to split transition maps~\cite{elliptic bundles}, 
which gave rise to proofs in Oka theory that were unthinkable before the late eighties. 
Gromov's approach was refined and generalized mainly by Forstneri\v c, see e.g.~\cite{Runge implies Oka, Francs book}. 
The purpose of this note is to establish Theorem~\ref{splitting}, a splitting lemma which extends 
Forsteneri\v c's results to transition maps of coherent sheaves. The proof of Theorem~\ref{splitting} is a consequence of 
Forstneri\v c's splitting lemma, see Proposition 5.8.4 in~\cite{Francs book}, and Theorem~\ref{lifting} of this note, a lifting lemma 
for transition maps of coherent sheaves. Applications of the obtained splitting lemma are discussed in the 
last section along Forster and Ramspott's Oka principle for admissible pairs~\cite{Forster und Ramspott} and 
 the interpolation property of the sheaf of holomorphic sections of an elliptic submersion~\cite{FP}. An appendix is 
 devoted to parametric versions of Theorem~\ref{lifting} and~\ref{splitting}.

\subsection{Motivation} An Oka principle is a theorem stating that there are 
only topological obstructions to a solution of a given complex analytic problem. In the tradition of Oka theory such 
theorems are formulated on a  Stein space $X$, e.g. $$X=\{z \in \C^l: f_1(z)=f_2(z)=\cdots=f_m(z)=0\}$$ for holomorphic 
functions $f_1, f_2, \ldots, f_m:\C^l \to \C$. 
The proof of an Oka principle follows many times a more or less well-known strategy. 
The main work is the inductive gluing of local solutions defined on subsets of $X$, whose domains grow with every step of the induction. 
This inductive step depends on a gluing lemma, which states that from given local solutions defined on subsets $A$ and $B$ of $X$ we can obtain 
-- under suitable assumptions -- a single solution defined on the union $A\cup B$. The proof of a gluing lemma is usually reduced 
to the proof of a splitting lemma. Let us discuss the role of splitting lemmata along Cartan's division theorem. 

\begin{example}
\label{division theorem}
Let $g_1, g_2, \ldots, g_n:X \to \C$ be holomorphic functions. Clearly, a necessary assumption to solve 
$$x_1g_1+\cdots +x_ng_n=1$$ in the ring of holomorphic functions $X\to \C$ is that $g_1, \ldots, g_n$ do not 
vanish simultaneously. Cartan's division theorem states that this assumption is sufficient provided that $X$ is Stein.
To formulate the problem efficiently it is worth to consider 
the associated sheaf homomorphism
$$\pi: \cO^n_X \to \cO_X, \ \pi(y_1,\ldots, y_n)=y_1g_1+\cdots +y_ng_n.$$ 
We wish to find a global solution $x=(x_1,\ldots, x_n) \in \cO^n_X(X)$ to $\pi(x)=1$. To illustrate the inductive step in the proof, assume that there 
are open subsets $A,B\subset X$ and local solutions $a\in \cO^n_X(A)$ and $b \in \cO^n_X(B)$ to the problem in question.
Then $c\coloneqq a-b$ is defined on $C\coloneqq A\cap B$ and lies in the kernel of $\pi$ 
by linearity. Instead of solving the gluing problem directly, we can try to establish the existence of a 
section $\alpha$ of $\ker \pi$ with domain $A$ and a section $\beta$ of $\ker \pi$ with domain $B$, which satisfy the equation 
$$c+\alpha=\beta$$ on $C$. This is called an additive splitting of $c$ over $(A,B)$ in the sheaf $\ker \pi$, the kernel of $\pi$. If we can solve this splitting problem, 
we can define a solution $d$ on $D=A\cup B$ to our initial problem by setting 
$d|A=a+\alpha$ and $d|B=b+\beta$. The map $d$ is well-defined since $c+\alpha=\beta$ on $C$, 
and satisfies $\pi(d)=1$ since $\pi$ is linear and $\alpha$ and $\beta$ are in the kernel of $\pi$.
\end{example}

Until today the gluing problem in Oka theory is most of times reduced to a splitting problem. 
However, if the target of the maps in question are no longer linear, the splitting cannot be formulated additively. The following 
reformulation allows the desired generalization. Let
$$\gamma:C \times \C^n \to C \times \C^n, \ \gamma(x,z)=(x,c(x)+z),$$ where $c=a-b$ is as in Example~\ref{division theorem}. 
Then, solving the additive splitting problem for $c$ over $(A,B)$ in $\ker \pi$ is equivalent 
to solving the splitting problem $\gamma \circ \alpha=\beta$ over $(A,B)$ in $\ker \pi$, where $\alpha$ and $\beta$ are now understood 
as local sections of the trivial bundle $X \times \C^n \to X$ defined on $A$ and $B$ respectively. 
Such $\gamma$ is called a transition map of the sheaf $\ker \pi$. 

\begin{definition}
\label{transition map}
Let $p:E \to X$ be a vector bundle over a reduced Stein base $X$ and let $\cF$ be a subsheaf 
of the sheaf of holomorphic sections of $E$.
A \textit{transition map} of $\cF$ is a holomorphic map 
$\gamma:U \to E$, $U\subset E$ open, which 
preserves $\cF$ in the sense that if $\delta$ is a local section of $\cF$ 
with values in $U$, then $\gamma \circ \delta$ is likewise 
a local section of $\cF$.
\end{definition}

\begin{remark}
A transition map $\gamma: U \to E$ of the sheaf of holomorphic sections of $E$ is simply a holomorphic map which preserves the fibers of $p:E\to X$. 
In that case we call $\gamma$ a transition map and omit to mention the sheaf.
\end{remark}

In settings of Oka theory where the target of the considered analytic maps are not linear, 
the reduction of the gluing problem to a splitting problem is significantly 
harder than in the case of the proof of Cartan's division theorem, see Example~\ref{division theorem}. It 
involves many times the implicit function theorem and Cartan's theorem B. 

\subsection{Results} 
To state the results, let us make the following agreements. 
All considered complex spaces are second countable and all considered spaces of maps are equipped with the compact open topology.
Moreover, $\B^m_r\subset \C^m$ denotes the open ball of radius $r>0$ centered at the origin and we 
use the following formulation concerning approximation.

\begin{definition}
\label{approx}
For a multivalued map $\Gamma:S \to T$ of topological spaces 
we say that \textit{$t \in \Gamma(s)$ can be chosen to approximate $t_0 \in \Gamma(s_0)$ as $s$ approximates $s_0$} if for every neighborhood $U$ of $t_0$ there is 
a neighborhood $V$ of $s_0$ such that $U \cap \Gamma(s) \not = \emptyset$ for $s \in V$.
\end{definition}

\begin{theorem}[Lifting Lemma]
\label{lifting}
Let $p:E \to W$ be a vector bundle over a reduced Stein base, let $f_1, \ldots, f_m:W \to E$ be holomorphic sections and set 
$$f:W \times \C^m \to E, \ f(x,z_1, \ldots, z_m)=z_1f_1(x)+\cdots +z_mf_m(x).$$
Moreover let $r>0$ and let $U\subset E$ be an open neighborhood of $f(W \times \B^m_r)$. Then, for every transition map $\gamma:U \to E$ of $\cF=\cO_W f_1 + \cdots +\cO_W f_m$, 
there is a transition map $\tilde \gamma: W \times \B^m_r \to W \times \C^m$ with $f \circ \tilde \gamma=\gamma \circ f|\B^m_r \times W$. 
Moreover $\tilde \gamma$ can be chosen to approximate the identity as $\gamma$ approximates the identity. 
\end{theorem}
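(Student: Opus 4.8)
The plan is to lift the entire problem to the Stein space $\hat W = W \times \B^m_r$ and there reduce it to Cartan's Theorem B. Write $q:\hat W \to W$ for the projection, let $\hat E=q^*E$ (a vector bundle over the Stein space $\hat W$, a product of Stein spaces being Stein), and let $\tilde f_1,\dots,\tilde f_m$ be the pullbacks of $f_1,\dots,f_m$, generating the coherent pullback sheaf $\hat\cF=q^*\cF$. The coordinate functions $z_1,\dots,z_m$ on $\B^m_r$ give the tautological section $s=\sum_i z_i\tilde f_i$ of $\hat\cF$, which satisfies $s(x,z)=f(x,z)$ and hence takes values in $U$. Applying $\gamma$ fibrewise over $W$ produces a section $g:=\gamma\circ s$ of $\hat E$, i.e.\ $g(x,z)=\gamma(f(x,z))$. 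If I can show that $g$ is a section of $\hat\cF$, then, since $\hat W$ is Stein and the evaluation $\Psi:\cO_{\hat W}^m\to\hat\cF$, $\Psi(h)=\sum_i h_i\tilde f_i$, is a surjection of coherent sheaves with coherent kernel, the long exact cohomology sequence together with $H^1(\hat W,\ker\Psi)=0$ lets me lift $g$ to a global $\phi=(\phi_1,\dots,\phi_m)\in\cO(\hat W)^m$. Setting $\tilde\gamma(x,z)=(x,\phi(x,z))$ gives a fibre-preserving holomorphic map, hence a transition map of the trivial bundle, and $f\circ\tilde\gamma=\gamma\circ f$ holds on $W\times\B^m_r$ by construction.

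The heart of the matter, and the step I expect to be the main obstacle, is showing that the jointly holomorphic section $g$ lies in $\hat\cF$. Fixing $(x_0,z_0)$ I would trivialise $E\cong W'\times\C^N$ near $x_0$, so that $f_i\leftrightarrow F_i:W'\to\C^N$, the sheaf $\cF$ becomes the submodule generated by the $F_i$, and $g(x,z)=\sum_\alpha g_\alpha(x)(z-z_0)^\alpha$ converges near $(x_0,z_0)$ with $g_\alpha\in\cO_{W',x_0}^N$. The only input from $\gamma$ is slicewise: for each fixed $z_1\in\B^m_r$ the map $x\mapsto f(x,z_1)$ is a section of $\cF$ with values in $U$, so by Definition~\ref{transition map} the germ $g(\cdot,z_1)$ lies in $\cF_{x_0}$. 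Writing $\cG=\cO_W(E)/\cF$ and $\hat\cG=q^*\cG$, this says precisely that the class $\bar g\in\hat\cG_{(x_0,z_0)}$ restricts to $0$ on every slice $z=z_1$.

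Differentiating this vanishing in $z$ at $z_0$ (legitimate since the restrictions form a holomorphic family) shows $g_\alpha\in\cF_{x_0}$ for every $\alpha$; hence for each $k$ the remainder $g-\sum_{|\alpha|<k}g_\alpha(z-z_0)^\alpha$ lies in $I^k\cO_{\hat W}^N$, where $I\subset\cO_{\hat W,(x_0,z_0)}$ is the ideal generated by $z-z_0$, and therefore $\bar g\in I^k\hat\cG_{(x_0,z_0)}$. Krull's intersection theorem, applied to the finitely generated module $\hat\cG_{(x_0,z_0)}$ over the Noetherian local ring $\cO_{\hat W,(x_0,z_0)}$, gives $\bigcap_k I^k\hat\cG_{(x_0,z_0)}=0$, so $\bar g=0$ and $g\in\hat\cF$ near $(x_0,z_0)$. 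Since $(x_0,z_0)$ was arbitrary, $g$ is a global section of $\hat\cF$, as required above.

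Finally, the approximation statement. The induced map $\Psi_*:\cO(\hat W)^m\to\Gamma(\hat W,\hat\cF)$ is a continuous surjection of Fréchet spaces for the compact-open topology, and $\Psi_*$ sends the tautological section $(x,z)\mapsto z$ to $s$, which is exactly $g$ for $\gamma=\Id$. As $\gamma$ approximates the identity, $g=\gamma\circ s$ approximates $s$ in the compact-open topology; by the open mapping theorem I may then choose the lift $\phi$ as close as desired to $(x,z)\mapsto z$, that is, $\tilde\gamma$ approximating the identity, which is the required conclusion in the sense of Definition~\ref{approx}.
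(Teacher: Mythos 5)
Your proof is correct in outline and shares the paper's overall skeleton -- show that $g=\gamma\circ f$ is a global section of the $\cO_{W\times\B^m_r}$-module sheaf generated by $f_1,\dots,f_m$, obtain the lift from surjectivity of the evaluation map $\cO^m\to\cF_m$ on global sections over the Stein space $W\times\B^m_r$ (Cartan B), and get the approximation statement from the open mapping theorem for Fr\'echet spaces. The genuine difference lies in how you prove the crucial stalk-level statement that slicewise membership in $\cF$ implies membership in the module generated by the $f_i$ over $\cO_{W\times\C^m}$ (the paper's Lemma~\ref{generators}). The paper expands $g$ in powers of $z$, shows the coefficients lie in the relevant submodule, assembles from this a \emph{formal} power series solution of $g=\sum h_jf_j$, and then invokes Artin's approximation theorem to convert it into a convergent one. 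You instead pass to the quotient module $\hat\cG_{(x_0,z_0)}=\cO^N_{(x_0,z_0)}/(\tilde f_1,\dots,\tilde f_m)$, observe that the class of $g$ lies in $I^k\hat\cG_{(x_0,z_0)}$ for every $k$, and kill it with Krull's intersection theorem. This replaces a deep approximation theorem with elementary commutative algebra over the Noetherian local ring $\cO_{W\times\C^m,(x_0,z_0)}$, and it also lets you skip the paper's reduction from $k$ parameters to one parameter and its appeal to finite generation of the stalk module (your generators are the given $f_i$, and $\hat\cG$ is automatically finitely generated as a quotient of $\cO^N$). This is a real simplification of the hardest lemma in the paper.

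One step deserves more care: your claim that ``differentiating the vanishing in $z$'' yields $g_\alpha\in\cF_{x_0}$ is exactly the point where the paper invests real work. The Taylor coefficients arise as locally uniform limits of difference quotients of the slices $g(\cdot,z_1)$, all of which lie in $\cF$; to conclude that the limits still lie in $\cF$ you need that the submodule $\cF(\Delta)\subset\cO^N(\Delta)$ is closed in the compact--open topology. This is the content of the paper's Lemma~\ref{Frechet} (closure of submodules, via Gunning--Rossi), and the corresponding induction on the order of the coefficient in the proof of Lemma~\ref{generators}. The fact is true, so your argument survives, but ``legitimate since the restrictions form a holomorphic family'' is not by itself a justification. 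A second, purely cosmetic point: what you call $q^*\cF$ and $q^*\cG$ should really be the image sheaf $\cO_{\hat W}\tilde f_1+\cdots+\cO_{\hat W}\tilde f_m$ and its quotient, since the inverse image sheaf need not inject into the sheaf of sections of $\hat E$; your argument only ever uses the image sheaf, so nothing breaks.
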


To state Theorem~\ref{splitting} we need the notion of a Cartan pair.

\begin{definition}
\label{C-pair}
Let $X$ be a complex space and $A,B \subset X$. The pair $(A,B)$ is a \textit{Cartan pair} if 
\begin{enumerate}
\item $A,B, A \cap B$ and $A \cup B$ are Stein compacts, and 
\item $\overline{A \setminus B} \cap \overline{B \setminus A}= \emptyset$.
\end{enumerate}
\end{definition}

\begin{theorem}[Splitting Lemma]
\label{splitting}
Let $p:E \to X$ be a vector bundle over a reduced Stein base and identify $X$ with the image of the zero section $X\to E$. Let $(A,B)$ be a Cartan pair in $X$ 
and $U\subset E$ a neighborhood of $A \cap B$, and let 
$\cF$ be a coherent subsheaf of the sheaf of holomorphic sections of $E$. 
Then there are neighborhoods $A'\supset A$ and $B'\supset B$ such that for every transition map $\gamma: U \to E$ of $\cF$ which 
is sufficiently close to the identity, there are $\alpha \in \cF(A')$ and $\beta \in \cF(B')$ such that 
$$\gamma \circ \alpha =\beta,$$ on $A'\cap B'$. 
Moreover, $\alpha$ and $\beta$ can be chosen to approximate 
the zero section as $\gamma$ approximates the identity.
\end{theorem}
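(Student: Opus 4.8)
The plan is to reduce the statement to Forstneri\v c's splitting lemma (Proposition 5.8.4 in~\cite{Francs book}) by means of the Lifting Lemma, Theorem~\ref{lifting}. First I would replace $\cF$ by a concrete system of generators on a neighborhood of the \emph{whole} pair $A\cup B$. Since $A\cup B$ is a Stein compact and $\cF$ is coherent, Cartan's Theorem A together with the compactness of $A\cup B$ yields a Stein open neighborhood $W\supset A\cup B$ and finitely many sections $f_1,\dots,f_m\in\cF(W)$ generating $\cF|_W$; as $\cF$ is a subsheaf of the sheaf of sections of $E$, these are holomorphic sections $f_i:W\to E$. Writing $f:W\times\C^m\to E$, $f(x,z)=z_1f_1(x)+\cdots+z_mf_m(x)$, we are then exactly in the situation of Theorem~\ref{lifting}. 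The reason for insisting on generators over all of $A\cup B$, rather than only over $A\cap B$, is that the push-forward along $f$ used below is defined only where the $f_i$ live; having them on a neighborhood of both $A$ and $B$ is what removes any need to extend sections of $\cF$ afterwards.

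Next I would lift $\gamma$ near the overlap. Choose $r>0$ and a Stein open neighborhood $W_0\subset W$ of $A\cap B$ with $f(W_0\times\B^m_r)\subset U$, which is possible because $f\bigl((A\cap B)\times\{0\}\bigr)=A\cap B\subset U$ and $f$ is continuous. Restricting the generators to $W_0$ we have $\cF|_{W_0}=\cO_{W_0}f_1+\cdots+\cO_{W_0}f_m$, so Theorem~\ref{lifting} applies to $\gamma$ and produces a transition map $\tilde\gamma:W_0\times\B^m_r\to W_0\times\C^m$ of the trivial bundle with $f\circ\tilde\gamma=\gamma\circ f$ on $W_0\times\B^m_r$, and with $\tilde\gamma$ approximating the identity as $\gamma$ does.

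Now $\tilde\gamma$ is a fiber-preserving map of the trivial bundle $W_0\times\C^m$, defined over a neighborhood of the overlap $A\cap B$ and close to the identity: precisely the input of Forstneri\v c's splitting lemma for the Cartan pair $(A,B)$. Applying it gives neighborhoods $A'\supset A$, $B'\supset B$, which after shrinking I take to satisfy $A',B'\subset W$ and $A'\cap B'\subset W_0$ (the latter is possible since $A'\cap B'$ can be forced into any prescribed neighborhood of $A\cap B$ by shrinking $A'$ and $B'$), together with $\C^m$-valued holomorphic sections $\tilde\alpha$ over $A'$ and $\tilde\beta$ over $B'$ approximating the zero section and satisfying $\tilde\gamma\circ\tilde\alpha=\tilde\beta$ on $A'\cap B'$. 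I then push these forward, setting $\alpha:=f\circ\tilde\alpha=\sum_i(\tilde\alpha)_i f_i$ and $\beta:=f\circ\tilde\beta=\sum_i(\tilde\beta)_i f_i$. Because the $f_i$ are defined on all of $W\supset A'\cup B'$ and lie in $\cF$, these are genuine sections $\alpha\in\cF(A')$ and $\beta\in\cF(B')$; and since $f$ maps the zero section to the zero section continuously, $\alpha$ and $\beta$ approximate the zero section as $\tilde\alpha,\tilde\beta$ do. Finally, on $A'\cap B'\subset W_0$ the defining identities combine to give
$$\gamma\circ\alpha=\gamma\circ f\circ\tilde\alpha=f\circ\tilde\gamma\circ\tilde\alpha=f\circ\tilde\beta=\beta,$$
which is the desired splitting; the approximation statement follows by chaining the three assertions ($\gamma\to\mathrm{id}$ forces $\tilde\gamma\to\mathrm{id}$, hence $\tilde\alpha,\tilde\beta\to 0$, hence $\alpha,\beta\to 0$), which also fixes the threshold for "$\gamma$ sufficiently close to the identity''.

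I expect the main obstacle to be the bookkeeping of the nested neighborhoods rather than any deep new idea, the conceptual work having been absorbed into Theorem~\ref{lifting}. Concretely, the delicate points are: (i) securing \emph{finitely many} generators of $\cF$ on a single neighborhood of $A\cup B$, so that the push-forward along $f$ needs no holomorphic extension of sections of $\cF$ from the overlap to the whole of $A$ and $B$, which is exactly where coherence enters; and (ii) arranging the domains so that the region $A'\cap B'$ on which the split equation must hold lies inside the neighborhood $W_0$ on which the lifting identity $f\circ\tilde\gamma=\gamma\circ f$ is available, and on which $\tilde\alpha$ still takes values in $\B^m_r$. Verifying that $\tilde\gamma$ meets the hypotheses of Forstneri\v c's splitting lemma is then routine.
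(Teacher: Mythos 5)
Your proposal is correct and follows essentially the same route as the paper: generate $\cF$ by finitely many sections via Cartan's Theorem A, lift $\gamma$ through $f=\sum z_if_i$ using Theorem~\ref{lifting}, split the lift with Forstneri\v c's Proposition 5.8.4, and push the resulting pair forward along $f$. The only (immaterial) difference is that the paper takes global sections $f_1,\dots,f_m\in\cF(X)$ generating the stalks over a neighborhood of $A\cap B$ alone, which makes the push-forward $f\circ\tilde\alpha$, $f\circ\tilde\beta$ automatically defined on $A'$ and $B'$ without your extra step of securing generators over all of $A\cup B$.
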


\begin{remark}
\label{special case}
In the special case where $E=X\times \C^m$ and $\cF=\cO^m_X$, the sheaf of sections of $X \times \C^m \to X$, the conclusion of Theorem~\ref{splitting} follows from Proposition 5.8.4, page 238 in~\cite{Francs book}, a key tool in modern Oka theory. 
\end{remark}

Let us make two remarks about the coherent sheaf $\cF$ from Theorem~\ref{splitting}. 
These remarks are unimportant in view of the given proofs, but hopefully help the reader to understand $\cF$ in more concrete terms.

\begin{remark}
Note that Theorem~\ref{splitting} only makes a statement about sections of the given coherent sheaf $\cF$ 
defined on subsets of a small neighborhood $V \coloneqq A'\cup B'$ of $A \cup B$, which 
can be assumed to be Stein and relatively compact in $X$. By Cartan's theorem A and the fact that $V$ is relatively compact, the stalks of $\cF|V$ are generated by a finite family 
$f_1,\ldots, f_m$ of global sections of $E$. Expressed differently, after perhaps replacing $X$ by a relatively compact Stein neighborhood of $A\cup B\subset X$, we can assume that $\cF$ is generated 
by finitely many global sections $f_1,\ldots, f_m$. The converse is true as well: If $f_1,\ldots, f_m$ are global sections of the vector bundle $E$ over $X$, then 
the sheaf $\cF$ generated by $f_1, \ldots, f_m$ is trivially of finite type. Moreover, every sheaf of relations corresponding to $\cF$ is trivially a sheaf of relations of the sheaf $\cE$
of holomorphic sections of $E$ (recall that $\cF\subset \cE$). Since $\cE$ is known to be coherent and thus of relation finite type, $\cF$ is of relation finite type as well, completing the proof that 
$\cF$ is coherent. 
\end{remark}

\begin{remark}
For a relatively compact Stein neighborhood $V\subset X$ of $A \cup B$, the restricted vector bundle $E|V$ embeds into a trivial bundle 
$V\times \C^k$ for a sufficiently large $k \in \N$. This is a consequence of theorem A and B. Consequently, 
after perhaps replacing $X$ by a relatively compact Stein neighborhood $V$ of $A \cup B$, the sheaf 
$\cF$ from Theorem~\ref{splitting} can be considered a subsheaf of $\cO^k_X$, which is generated by finitely many holomorphic maps $f_1, \ldots, f_m: X \to \C^k$ by the previous remark.
\end{remark}

\begin{proof}[Proof of Theorem~\ref{splitting}] 
Let $W$ be a Stein neighborhood of $A \cap B$ which is relatively compact in $X\cap U$. 
By Cartan's theorem A there are finitely many global sections 
$f_1, \ldots, f_m \in \cF(X)$ which generate the stalk at every point of $W$, i.e. 
$\cF|W=\cO_{W}f_1+\cdots +\cO_{W}f_m$. Let 
\begin{align*}
f:X \times \C^m \to E, \ f(x,z_1,\ldots, z_m)=z_1f_1(x)+\cdots +z_mf_m(x)
\end{align*}
and $r>0$ such that $f(W \times \B^m_r) \subset U$. 
By Remark~\ref{special case} there are neighborhoods $A'\supset A$ and $B' \supset B$ such that for every transition map $\tilde \gamma: W \times \B^m_r \to W \times \C^m$ 
which is sufficiently close to the identity there are $\tilde \alpha \in \cO^m_X(A')$ and $\tilde \beta \in \cO^m_X(B')$ with $\tilde \gamma \circ \tilde \alpha = \tilde \beta$ on $A'\cap B'$. In addition 
$\tilde \alpha$ and $\tilde \beta$ can be chosen to approximate the zero sections as $\tilde \gamma$ approximates the identity.
Now, it follows from Theorem~\ref{lifting} that if $\gamma:U \to E$ is a transition map of $\cF$ which 
approximates the identity sufficiently well, then there is a lift $\tilde \gamma$ of $\gamma$ through $f$ for which we find such a splitting $(\tilde \alpha, \tilde \beta)$, and that the splitting $(\tilde \alpha, \tilde \beta)$ can 
be chosen to approximate the zero section as $\gamma$ (and therefore the chosen lift $\tilde \gamma$) approximates the identity. For $\alpha= f \circ \tilde \alpha \in \cF(A')$ and $\beta = f \circ \tilde \beta \in \cF(B')$ we get on $A'\cap B'$ 
\begin{align*}
\gamma \circ \alpha =\gamma \circ f \circ \tilde \alpha = f \circ \tilde \gamma \circ \tilde \alpha =f \circ \tilde \beta=\beta.
\end{align*}
This finishes the proof.
\end{proof}

We are optimistic that the presented results generalize in a natural way to a coherent sheaf $\cF$ which 
is not necessarily a subsheaf of some sheaf of sections of a vector bundle. However, such an extension would be 
unnecessarily complicated in most settings considered in Oka theory and we 
omit a careful study of the following 
question for now. 

\begin{question}
Is there a natural notion of a (holomorphic) transition map for arbitrary coherent sheaves 
with interesting applications in Oka theory?
\end{question}

\begin{acknowledgment}
\normalfont
I would like to thank Frank Kutzschebauch for suggesting the topic and an observation which led to the complete proof of Lemma~\ref{generators}. 
Moreover I would like to thank Franc Forstneri\v c for the invitation to Ljubljana in fall 2016 and stimulating discussions during the stay. 
Finally I thank Finnur L\'arusson and Filip Misev for helpful advice concerning the presentation.
\end{acknowledgment}

\section{Proof of Theorem~\ref{lifting}}
We need the notion of parametric sheaves defined in the following way. 

\begin{definition}
\label{parametric sheaf}
Let $p:E \to X$ be a vector bundle over a reduced complex space and let $\cF$ be a subsheaf of $\cO_X$-modules of the 
sheaf of holomorphic sections of $E$. We define $\cF_k$, $k\geq 1$ as the sheaf of $\cO_{X \times \C^k}$-modules of holomorphic maps $X \times \C^k \to E$, whose local sections  
$f$ satisfy in addition that $f(\cdot , z)$ is in $\cF$ whenever $z \in \C^k$ is fixed.
\end{definition}

\begin{remark}
In the case where $\cF=\cE$ is the sheaf of sections of $E$, $\cE_k$ is the sheaf of sections of the pull-back bundle of $E$ via the projection $\pr:X\times \C^k \to X$.
\end{remark}

\begin{remark}
From the viewpoint of sheaves of modules, an alternative and more abstract definition of $\cF_k$ is 
to set $\cF_k=\pr^\ast \cF$, the inverse image sheaf defined by $$\pr^\ast \cF=\pr^{-1} \cF \otimes_{\pr^{-1}\cO_X} \cO_{X \times \C^k}.$$
However, in the case where $\cF$ is not locally free, it is not obvious that the two definitions yield the same sheaf of $\cO_{X\times \C^k}$-modules. 
The proof that this is the case 
is hidden in our next lemma, the only ingredient in the preparation of the proof of Theorem~\ref{lifting} for which we are not aware of a good 
reference in the literature.
\end{remark}

In the following we consider $\cF$ as a subsheaf of $\cF_k$ given by those elements which do not depend on $z \in \C^k$.

\begin{lemma}
\label{generators}
Let $p:E\to X$ be a vector bundle over a reduced complex space, let $\cF$ be a subsheaf of $\cO_X$-modules of the sheaf of holomorphic sections of $E$ and let $k\geq 1$. 
If $S \subset \cF_q$ generates the stalk of $\cF$ at some $q\in X$, then $S$ generates the 
stalks of $\cF_k$ at every point $(q,z)$, $z \in \C^k$.
\end{lemma}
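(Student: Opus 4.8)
The plan is to reduce the statement to a vanishing assertion about a section of the pullback of a coherent sheaf, and then to settle that assertion by a power-series expansion in the fibre variable.

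First I would localise. Since $p:E\to X$ is a vector bundle, after shrinking to a neighbourhood of $q$ I may trivialise $E\cong \Delta\times\C^n$, so that the sheaf of holomorphic sections of $E$ becomes $\cO^n$ and $\cF\subset\cO^n$. Writing $A:=\cO_{X,q}$, and translating the $\C^k$–coordinate, I may assume the distinguished point is $(q,0)$ and set $B:=\cO_{X\times\C^k,(q,0)}$. Because $A$ is a Noetherian local ring, the submodule $\cF_q\subseteq A^n$ is finitely generated; as $S$ generates $\cF_q$, finitely many elements $f_1,\dots,f_m\in S$ already generate $M:=\cF_q$. It therefore suffices to prove that $f_1,\dots,f_m$, regarded as $z$–independent germs, generate the stalk $(\cF_k)_{(q,0)}$ over $B$. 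Unwinding Definition~\ref{parametric sheaf}, an element of $(\cF_k)_{(q,0)}$ is a germ $g\in B^n$ such that for every fixed small $w$ the germ at $q$ of $x\mapsto g(x,w)$ lies in $M$.

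Next I would recast generation as a vanishing condition. Let $\Phi:=[\,f_1\mid\cdots\mid f_m\,]$, an $n\times m$ matrix over $A$, so $M=\im(\Phi:A^m\to A^n)$; extending $\Phi$ constantly in $z$ yields $\Phi:B^m\to B^n$ with image the $B$–module $MB$ generated by the $f_i$. Thus I must show $g\in MB$ for every $g$ as above. The point to stress is that although $\cF$ itself is not assumed coherent, $\Phi$ is a morphism of free $\cO$–modules, so by Oka's coherence theorem its cokernel $\cC:=\operatorname{coker}(\Phi:\cO^m\to\cO^n)$ is a coherent sheaf near $q$, with stalk $\cC_q\cong A^n/M$. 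Since $\Phi$ does not involve $z$, the corresponding cokernel on $X\times\C^k$ is the pullback $\pr^\ast\cC$, and one has $(\pr^\ast\cC)_{(q,0)}\cong \cC_q\otimes_A B\cong B^n/MB$. Hence $g\in MB$ is equivalent to the vanishing of the image $\bar g$ of $g$ in the stalk $(\pr^\ast\cC)_{(q,0)}$, and the fibrewise hypothesis says precisely that for every small $w$ the germ at $q$ of the slice $\bar g(\cdot,w)$ vanishes in $\cC_q$.

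So everything comes down to the following assertion, which I expect to be the main obstacle: a germ of a section of the pullback $\pr^\ast\cC$ of a coherent sheaf, whose restriction to each slice $X\times\{w\}$ has vanishing germ at $q$, is itself the zero germ. Here I would expand $\bar g=\sum_\nu \bar g_\nu\,z^\nu$ as a convergent power series with coefficients $\bar g_\nu\in\cC_q$, the content being that for a coherent $\cC$ the stalk $(\pr^\ast\cC)_{(q,0)}$ is honestly the module of convergent $z$–power series with coefficients in $\cC_q$; this is the step where coherence is indispensable (it provides the Fréchet structure on sections of $\cC$ and the faithful flatness of $A\to A\{z\}=B$), and it is exactly the subtlety flagged in the remark for non-locally-free $\cF$. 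Granting the expansion, the hypothesis reads $\sum_\nu \bar g_\nu\,w^\nu=0$ in $\cC_q$ for all small $w$; recovering the coefficients by differentiating the $\cC_q$–valued holomorphic map $w\mapsto\sum_\nu\bar g_\nu w^\nu$ at the origin gives $\bar g_\nu=0$ for all $\nu$, whence $\bar g=0$. Then $\bar g=0$ in $B^n/MB$ means $g\in MB$, i.e. $g$ is a $B$–combination of $f_1,\dots,f_m\in S$; thus $S$ generates $(\cF_k)_{(q,0)}$, and since $0$ was an arbitrary fibre point after translation, $S$ generates $(\cF_k)_{(q,z)}$ for every $z\in\C^k$.
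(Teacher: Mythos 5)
Your reduction of the lemma to the assertion that a germ of a section of $\pr^\ast\cC$, $\cC=\operatorname{coker}(\Phi)$, whose slices all vanish at $q$ must itself vanish is a faithful repackaging of the problem, and the identification $(\pr^\ast\cC)_{(q,0)}\cong B^n/MB$ is fine (it only uses right exactness of the pullback). The genuine gap is the final implication ``$\bar g_\nu=0$ for all $\nu$, whence $\bar g=0$.'' Unwound, this says: if $g=\sum_\nu g_\nu z^\nu\in B^n$ has every Taylor coefficient $g_\nu$ in $M$, then $g$ is a $B$-linear combination of $f_1,\dots,f_m$. That is exactly the hard point of the whole lemma, and the reasons you cite do not deliver it. Writing $g_\nu=\sum_j g_{j,\nu}f_j$ coefficient by coefficient produces a formal series $\hat h_j=\sum_\nu g_{j,\nu}z^\nu$ with $g=\sum_j\hat h_jf_j$, but nothing controls the growth of the $g_{j,\nu}$, so the $\hat h_j$ need not converge. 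Equivalently, your claim that $(\pr^\ast\cC)_{(q,0)}$ ``is honestly the module of convergent $z$-power series with coefficients in $\cC_q$'' is precisely the injectivity of the Taylor-coefficient map $B^n/MB\to\prod_\nu\cC_q$, which you assert rather than prove (note $\cC_q$ carries no norm, so ``convergent'' is not even defined there). Faithful flatness of $A\to B=A\{z\}$ does not give this: it identifies $B^n/MB$ with $\cC_q\otimes_A B$ but says nothing about recovering an element from its fibre-direction Taylor coefficients. The paper closes exactly this gap with Artin's approximation theorem (Lemma~\ref{Artin's approximation theorem}), converting the formal solution $f=\hat h_1f_1+\cdots+\hat h_mf_m$ of a linear equation into a convergent one. (An algebraic patch that would work is faithful flatness of the completion $B\to\hat B\cong\hat A[[z]]$: coefficientwise lifting causes no convergence problem formally, and Krull's theorem gives $M\hat B\cap B^n=MB$; but that is a different flatness statement from the one you invoke, and you would still have to write it out.)

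A secondary, repairable issue: ``differentiating the $\cC_q$-valued holomorphic map $w\mapsto\sum_\nu\bar g_\nu w^\nu$'' is not meaningful as stated, since $\cC_q$ has no complete topology in which to take limits of difference quotients. The paper performs this step upstairs, in the Fr\'echet space $\cO^n(\Delta^l)$, using that the submodule $M(\Delta^l)$ is closed (Lemma~\ref{Frechet}) so that the limits of the quotients $\lambda_z$ remain in it, and only then projects to the quotient; you would need the same device before passing to $\cC_q$.
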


The given proof of Lemma~\ref{generators} depends on the following well-known results.

\begin{lemma}
\label{finitely generated}
Any submodule $M$ of a stalk of $\cO^n_{\C^l}$ is finitely generated.
\end{lemma}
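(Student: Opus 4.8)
The plan is to deduce the statement from the Noetherian property of the local ring of germs of holomorphic functions. First I would fix a point $p \in \C^l$ and identify the stalk $\cO_{\C^l, p}$ with the ring $R=\C\{z_1-p_1, \ldots, z_l-p_l\}$ of convergent power series centered at $p$. Under this identification the stalk of $\cO^n_{\C^l}$ at $p$ becomes the free module $R^n$, so the submodule $M$ in question is simply a submodule of $R^n$, and the claim is that $M$ is finitely generated over $R$.

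The central input is that $R$ is Noetherian. This is the Rückert basis theorem, which one establishes by induction on the number $l$ of variables using the Weierstrass preparation and division theorems: for $l=0$ the ring is the field $\C$, and the inductive step realizes $R$ as a finite module over $\C\{z_1-p_1, \ldots, z_{l-1}-p_{l-1}\}$ after a generic linear change of coordinates, which transports the ascending chain condition upward. Since this is a standard fact in the theory of several complex variables, I would only cite it rather than reprove it.

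Granting that $R$ is Noetherian, the remaining steps are formal module theory. A finite direct sum of Noetherian modules is again Noetherian, so the free module $R^n$ is a Noetherian $R$-module; and a module is Noetherian precisely when every submodule is finitely generated. Applying this to $M \subset R^n$ finishes the proof. The only genuinely nontrivial ingredient is thus the Noetherianity of $R$, i.e. the Rückert basis theorem, and everything else is a routine consequence; I therefore expect the actual write-up to amount to one sentence identifying the stalk with convergent power series, followed by a citation and the two-line module-theoretic conclusion.
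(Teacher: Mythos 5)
Your proposal is correct and follows essentially the same route as the paper: the paper's proof likewise cites the Noetherianity of the stalks of $\cO_{\C^l}$ (Gunning--Rossi, Theorem 9, p.~72, i.e.\ the R\"uckert basis theorem you invoke) and then concludes by the standard fact that submodules of $R^n$ over a Noetherian ring $R$ are finitely generated. The only difference is that you spell out the intermediate module-theoretic steps slightly more explicitly, which is harmless.
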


\begin{proof}
This follows from the facts that the stalks of $\cO_{\C^l}$ are Noetherian (see e.g.~\cite{Gunning/Rossi}, Theorem 9, page 72) and 
that if a ring $R$ is Noetherian, then the submodules of $R^n$ are finitely generated.
\end{proof}
 
\begin{lemma}
\label{Frechet}
Let $p:E \to X$ be a vector bundle over a reduced complex space and let $\cF$ be a subsheaf of $\cO_X$-modules of the sheaf of holomorphic sections $\cE$ of $E$. 
Then $\cF$ is a Fr\'echet subsheaf of $\cE$.
\end{lemma}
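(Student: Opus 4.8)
The plan is to read ``Fr\'echet subsheaf'' as the assertion that for every open $V \subset X$ the space of sections $\cF(V)$ is a closed linear subspace of $\cE(V)$, where $\cE(V)$ carries the compact-open topology. Since $X$ is second countable it admits a countable exhaustion by compact sets, which furnishes a countable family of seminorms on $\cE(V)$; completeness follows from the fact that a sequence of holomorphic sections converging uniformly on compacta has a holomorphic limit, so $\cE(V)$ is a Fr\'echet space and the restriction maps $\cE(V) \to \cE(V')$ are continuous. A closed subspace of a Fr\'echet space is again Fr\'echet and inherits continuous restrictions, so once closedness of $\cF(V)$ is established the lemma follows. Thus the entire content is the claim that $\cF(V)$ is closed in $\cE(V)$.

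First I would reduce this to a pointwise, local statement. Because $\cF$ is a subsheaf of $\cE$, a section $s \in \cE(V)$ lies in $\cF(V)$ if and only if its germ $s_x$ lies in $\cF_x$ for every $x \in V$; hence
$$\cF(V) = \bigcap_{x \in V}\{\, s \in \cE(V) : s_x \in \cF_x \,\}.$$
An intersection of closed sets is closed, so it suffices to show that each set on the right is closed. Fixing $x$, the membership $s_x \in \cF_x$ depends only on the restriction of $s$ to a neighborhood $W$ of $x$ on which $E$ is trivial, say $\cE|_W \cong \cO_W^n$. The set in question is then the preimage, under the continuous restriction map $\cE(V) \to \cO(W)^n$, of $\{\, t \in \cO(W)^n : t_x \in \cF_x \,\}$, so it is enough to prove that this last set is closed in $\cO(W)^n$, where now $\cF_x$ is a submodule of $\cO_{X,x}^n$.

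The crux is therefore a purely local assertion about a single stalk. The stalk $\cO_{X,x}$ is Noetherian: locally $X$ embeds as a reduced analytic subset of a domain in some $\C^l$, so $\cO_{X,x}$ is a quotient of $\cO_{\C^l,x'}$, and Lemma~\ref{finitely generated} shows that $\cF_x$, pulled back to a submodule of $\cO_{\C^l,x'}^n$, is finitely generated; hence $\cF_x$ is finitely generated over $\cO_{X,x}$. What remains is the classical fact that a finitely generated submodule $M \subseteq \cO_{X,x}^n$ is \emph{closed for the analytic topology}, i.e. that $\{\, t \in \cO(W)^n : t_x \in M \,\}$ is closed in the compact-open topology (see e.g.~\cite{Gunning/Rossi}). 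I expect this to be the main obstacle. The subtlety is that the $\mathfrak{m}$-adic topology on $\cO_{X,x}^n$ and the analytic topology do not coincide, so Krull's intersection theorem alone does not suffice and one genuinely needs analytic estimates. The way I would supply them is the Weierstrass--Hironaka division theorem with bounds: after choosing a standard basis of $M$, division produces for each $t$ a normal form $t \bmod M$ depending continuously on $t$, and $t_x \in M$ holds precisely when this normal form vanishes. The membership set is then the kernel of a continuous linear map, hence closed, which closes the argument.
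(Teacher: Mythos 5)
Your proposal is correct and in substance coincides with the paper's proof: the paper disposes of the lemma by citing Proposition 2, p.~235 of~\cite{Gunning/Rossi} (Cartan's closure-of-modules theorem) for the case $E=X\times\C^n$ and noting that the extension to a general vector bundle is easy. Your reduction to stalks, local trivialization, and the closedness of the finitely generated submodule $\cF_x\subset\cO_{X,x}^n$ in the analytic topology is precisely the content of that cited result, so you have unpacked the same argument rather than found a different one.
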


\begin{proof}
For details in the case $E=X \times \C^n$ and hence $\cE=\cO^n_X$ compare Proposition 2, page 235 in~\cite{Gunning/Rossi}. The 
extension to a possibly non-trivial vector bundle $E$ is easy. 
\end{proof}

We will use the following special case of Artin's approximation theorem.

\begin{lemma}
\label{Artin's approximation theorem}
Let $F=F(x,w)$ be a germ of $\cO^{n}_{\C^l \times \C^m}$ at the origin, where 
$x=(x_1, \ldots, x_l)\in \C^l$ and $w=(w_1, \ldots, w_m)\in \C^m$. Then there is a germ $h$ of $\cO^m_{\C^l}$ at the origin which solves $$F(x,h(x))=0$$ if and only if there is a formal power series solution $\hat h$ of the same equation.
\end{lemma}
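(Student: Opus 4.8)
The plan is to dispose of the two implications separately, since only one of them carries any content. The implication ``convergent solution $\Rightarrow$ formal solution'' is automatic: given a germ $h \in \cO^m_{\C^l}$ with $F(x,h(x))=0$, the composite $x \mapsto F(x,h(x))$ is the zero germ, so its Taylor series at the origin vanishes identically. But that Taylor series is precisely the formal substitution of the Taylor series $\hat h$ of $h$ into the germ $F$, so $F(x,\hat h(x))=0$ holds as an identity of formal power series and $\hat h$ is a formal solution. No work is needed here.

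All the substance sits in the converse, which is exactly the statement that the local ring $\C\{x_1,\dots,x_l\}$ of convergent power series has the Artin approximation property: whenever the analytic system $F(x,w)=0$ admits a formal power-series solution $\hat h$, it admits a convergent solution $h$. I would reduce the lemma to the standard form of Artin's approximation theorem, which in fact produces a convergent $h$ agreeing with the prescribed $\hat h$ to any given finite order; the mere existence of \emph{some} convergent solution, which is all we need, is its weakest consequence. I will quote this from Artin's paper rather than reprove it.

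The step I expect to be the genuine obstacle --- and the reason for citing rather than reconstructing --- is exactly this formal-to-convergent passage. Artin's proof proceeds by induction on the number $l$ of base variables: when the Jacobian $\partial F/\partial w$ is invertible along the formal solution, the holomorphic implicit function theorem already yields a convergent solution; the difficulty is the degenerate case, handled by a Weierstrass-preparation change of variables that lowers either the number of equations or the order of vanishing, together with estimates ensuring that the successive corrections remain convergent. This inductive bookkeeping is the technical heart of the theorem, so for our purposes the efficient route is to invoke it as a black box.
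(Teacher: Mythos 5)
Your proposal is correct and matches the paper's treatment: the paper simply cites Artin's paper (p.~277) for this lemma, and you likewise dispose of the trivial direction and invoke Artin's approximation theorem as a black box for the substantive one. The extra remarks on the structure of Artin's induction are accurate but not needed, since both you and the paper delegate that content to the reference.
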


\begin{proof}
See~\cite{Artin}, p. 277.
\end{proof}

\begin{proof}[Proof of Lemma~\ref{generators}]
Note that $(\cF_k)_1=\cF_{k+1}$ for $k\in \N$, hence the case $k>1$ follows from the case $k=1$ with a simple induction.
We show the case $k=1$. Since the statement is local we may assume that $E=X\times \C^n$. 
By passing  to a local model we may assume that $X \subset \C^l$ is a closed subvariety 
of some open $U\subset \C^l$ and $0 \in X$. Moreover, it suffice to consider the case $(q,z)=(0,0) \in \C^l\times \C$. 
Let $M$ be the $\cO_{\C^l,0}$-submodule of $\cO^n_{\C^l,0}$ given by the condition $$f|X \in \cF_0 \text{   if   }f \in M.$$ Similarly, 
define $M_1$ as the $\cO_{\C^l\times \C,(0,0)}$-submodule of $\cO^n_{\C^l\times \C,(0,0)}$ given by the condition 
$f(\cdot, z)|X \in \cF_0$ if $z \in \C$ is fixed.
There are $f_1, \ldots, f_m \in M$ which generate $M$ by Lemma~\ref{finitely generated}. If we can show 
that $f_1|X, \ldots, f_m|X$ generate the stalk of $\cF_1$ at $(0,0)\in X \times \C$, 
then, since $S$ generates the stalk of $\cF$ at the origin and therefore the germs of $f_1|X, \ldots, f_m|X$ at $0$, we are done. 
To show this, it suffice to show that $f_1, \ldots, f_m$ generate $M_1$, for which we have in fact 
$$M_1=\{f \in \cO^n_{{\C^l \times \C},(0,0)}: f(\cdot,z)|X\in \cF_0\}=\{f \in \cO^n_{{\C^l \times \C},(0,0)}: f(\cdot,z)\in M\}.$$ 
To show that $M_1$ is generated by $f_1, \ldots, f_m$, let $f \in M_1$. In a neighborhood of the origin $f$ is represented by an analytic map, which can be written 
as $$f(x,z)=  \sum_{i\geq 0} z^ig_i(x)$$ for suitable $g_i \in \cO^n(\Delta^l)$ and $z \in \Delta$ for a sufficiently small disc $\Delta\subset \C$ centered at $0$. 
Let $M(\Delta^l)\subset \cO^n(\Delta^l)$ be the $\cO(\Delta^l)$-module given by 
those elements of $\cO^n(\Delta^l)$ which restrict to an element of $M$. 
We show with an induction on $i\geq 0$ that $g_i \in M(\Delta^l)$. 
Since $f \in M_1$ we have that 
$g_0=f(\cdot, 0) \in M(\Delta^l)$, which completes the beginning of the induction. 
Assume for the inductive step that $g_0, \ldots, g_i \in M(\Delta^l)$. Since 
$M(\Delta^l)$ is a $\cO(\Delta^l)$-module and thus a $\C$-vector space, we get for fixed $z \in \Delta\setminus \{0\}$ from $f(\cdot, z) \in M(\Delta^l)$
\begin{align*}
\lambda_z(x)	&=g_{i+1}(x)+zg_{i+2}(x) +z^2g_{i+3}(x)+\cdots \\
		&=\frac{f(x,z) - (g_0(x)+zg_1(x)+\cdots +z^ig_i(x))}{z^i} \in M(\Delta^l).
\end{align*}
For $z\to 0$, $\lambda_z:\Delta^l \to \C^n$ converges uniformly on compacts to $g_{i+1}$. 
Since submodules of $\cO^n(\Delta^l)$ are complete with respect to the compact open topology (see Lemma~\ref{Frechet}), we get that $g_{i+1} \in M(\Delta^l)$. This finishes the inductive step.
Now, knowing that $g_i \in M$ and that $f_1, \ldots, f_m$ generate $M$, there are function germs $g_{j,i}$ at the origin of $\C^l \times \C$, $i\geq 0$, $j =1, \ldots, m$ 
such that 
\begin{align*}
g_i=g_{1,i}f_1+ \cdots +g_{m,i}f_m.
\end{align*}
In particular, purely at the level of formal power series, we get for 
$$\hat h_j(x,z)\coloneqq\sum_{i\geq 0} z^i g_{j,i}(x), \ j=1, \ldots, m$$ the equality 
$$f=\sum_{i\geq 0} z^i g_i=\sum_{i\geq 0} z^i (g_{1,i}f_1+ \cdots +g_{m,i}f_m)=\hat h_1f_1+\cdots +\hat h_mf_m.$$ 
For $w=(w_1, \ldots, w_m) \in \C^m$, applying Artin's approximation theorem (see Lemma~\ref{Artin's approximation theorem}) to the germ 
$F(x,z,w)=f(x,z)-(w_1f_1(x)+\ldots +w_mf_m(x))$ yields function germs $h_1, \ldots, h_m$ at the origin of $\C^l \times \C$ which satisfy 
$$f=h_1f_1+\cdots +h_mf_m.$$ This is what we had to show and finishes the proof. 
\end{proof}

\begin{lemma}
\label{Theorem B}
Let $D$ be Stein and $\cF$ a coherent sheaf generated by global sections $f_1, \ldots, f_m \in \cF(D)$. 
Then $\cF(D)=\cO(D)f_1+\cdots + \cO(D)f_m$.
\end{lemma}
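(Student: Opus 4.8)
The plan is to realize $\cF(D)$ as the image of the global sections of a free sheaf under the evaluation map determined by the generators, and then to obtain surjectivity of that map from Cartan's Theorem~B. First I would introduce the $\cO_D$-module homomorphism
$$\phi:\cO_D^m \to \cF, \qquad \phi(a_1,\ldots,a_m)=a_1 f_1+\cdots+a_m f_m.$$
Since $f_1,\ldots,f_m$ generate $\cF$, the map $\phi$ is surjective on every stalk and hence surjective as a morphism of sheaves. Let $\cR=\ker\phi$ be the corresponding sheaf of relations. As $\cO_D^m$ is free of finite rank and $\cF$ is coherent by hypothesis, the kernel $\cR$ of a morphism of coherent sheaves is again coherent, and we obtain a short exact sequence of coherent sheaves
$$0 \to \cR \to \cO_D^m \xrightarrow{\ \phi\ } \cF \to 0.$$

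Next I would pass to the associated long exact sequence in sheaf cohomology, the relevant portion of which reads
$$H^0(D,\cO_D^m) \xrightarrow{\ \phi\ } H^0(D,\cF) \to H^1(D,\cR).$$
Under the identifications $H^0(D,\cO_D^m)=\cO(D)^m$ and $H^0(D,\cF)=\cF(D)$, the left-hand map is precisely $(a_1,\ldots,a_m)\mapsto a_1 f_1+\cdots+a_m f_m$. The decisive, and only nonformal, step is to invoke Cartan's Theorem~B: because $D$ is Stein and $\cR$ is coherent, $H^1(D,\cR)=0$. Exactness then forces the global evaluation map $\cO(D)^m \to \cF(D)$ to be surjective, which says exactly that every global section of $\cF$ can be written as $a_1 f_1+\cdots+a_m f_m$ with $a_j\in\cO(D)$. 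The reverse inclusion $\cO(D)f_1+\cdots+\cO(D)f_m\subset\cF(D)$ is immediate, since each $f_j$ lies in $\cF(D)$ and $\cF$ is a sheaf of $\cO_D$-modules, so the two sides agree.

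I do not expect any genuine obstacle beyond correctly assembling the exact sequence; the entire content of the statement is the vanishing $H^1(D,\cR)=0$ provided by Theorem~B, with the coherence of $\cR$ (inherited from the coherence of $\cF$ and the freeness of $\cO_D^m$) being exactly what guarantees that Theorem~B applies.
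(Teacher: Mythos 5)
Your proposal is correct and is essentially the paper's own argument: the paper likewise reduces the statement to the fact that a sheaf epimorphism $\cO^m_D \to \cF$ over a Stein space induces an epimorphism on global sections via Cartan's Theorem B, which you have simply spelled out through the coherent kernel $\cR$ and the long exact cohomology sequence.
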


\begin{proof}
It suffice that if $D$ is Stein and $\cF$ is coherent, then a sheaf epimorphism $\cO^m_D \to \cF$ induces an epimorphism $\cO^m(D) \to \cF(D)$, 
an application of Cartan's theorem B.
\end{proof}

\begin{proof}[Proof of Theorem~\ref{lifting}]
For $D \coloneqq W \times \B^m_r$ our assumptions imply immediately $\gamma \circ f|D \in \cF_m(D)$, where $\cF=\cO_Wf_1+\cdots +\cO_Wf_m$. By Lemma~\ref{generators} $f_1, \ldots, f_m$ 
generate $\cF_m$ as a sheaf of $\cO_{W \times \C^m}$-modules and by Lemma~\ref{Theorem B} 
we have $\cF_m(D)=\cO(D)f_1+\cdots +\cO(D)f_m$. Expressed differently, 
$$\pi: \cO^m(D) \to \cF_m(D), \ (h_1,\ldots, h_m) \mapsto h_1f_1+\cdots +h_mf_m$$ is onto. 
In particular there is $\tilde \gamma \in \cO^m(D)$ with $\pi(\tilde \gamma)= \gamma \circ f|D$. Such $\tilde \gamma$ is a lift 
of $\gamma$ through $f$ since $\pi(\tilde \gamma)=f \circ \tilde \gamma$ by definition. 
To see that we can choose the lift $\tilde \gamma$ to approximate the identity on $D$ as $\gamma$ approximates the identity on $U$, 
note that $\cF_m(D)$ is a Fr\'echet space by Lemma~\ref{Frechet}. In particular $\pi: \cO^m(D) \to \cF_m(D)$ is a surjective linear map of Fr\'echet spaces. 
If $\gamma$ approximates the identity, then $\gamma \circ f|D$ approximates $f|D=f \circ \Id_D=\pi(\Id_D)$, hence the lift 
$\tilde \gamma$ with $\pi(\tilde \gamma)=\gamma \circ f|D$ can be chosen to approximate the identity by the open mapping theorem for surjective linear maps of Fr\'echet spaces. 
This finishes the proof.
\end{proof}

\section{Applications}
In this section we give a hint how the obtained splitting lemma (Theorem~\ref{splitting}) applies in the proof of Forster and Ramspott's Oka principle for admissible pairs 
and to extension theorems of modern Oka theory. 

\subsection{Splitting in the proof of the Oka principle for admissible pairs}
This concerns the work of Forster and Ramspott, see~\cite{Forster und Ramspott}.
We get straight to the point rather than stating the corresponding Oka principle. 
In this setting one can formulate the necessary splitting in a (complex analytic) Lie group bundle $\pi:P \to X$ with fiber $G$, 
for which we give a shorter proof than the analogous result in ~\cite{Forster und Ramspott}. 
The base $X$ is as usually assumed to be reduced and Stein.
There is a Lie algebra bundle $p:E \to X$ (which is in particular a vector bundle) 
associated to $\pi$, whose fiber is the Lie algebra $\mathfrak{g}$ of $G$. Moreover, there is an exponential map $\exp:E \to P$ (of fiber bundles over $X$), 
which has a logarithm $\log$, that is an inverse to $\exp$, defined in a neighborhood of the 
identity section of $P$. The maps $\exp$ and $\log$ are the usual exponential map and logarithm corresponding to $G$ and its Lie algebra $\mathfrak{g}$ if 
restricted to single fibers of $p$ and $\pi$ respectively.
Considered are a coherent subsheaf of Lie algebras $\cF$ of the 
sheaf of holomorphic sections of $p:E \to X$ and a subsheaf of groups $\cG$ of the  sheaf of holomorphic sections of $\pi: P \to X$, which is generated 
by $\cF$ in the following sense: if $f$ is a local section of $\cF$, then $\exp \circ f$ is a local section of $\cG$. Moreover, there 
is a neighborhood $U$ of the identity section in $P$  such that the local sections of $\cG$ with values in $U$ are mapped 
bijectively to the local sections of $\cF$ with values in $\log(U)$. The desired splitting is 

\begin{corollary}
\label{group splitting}
Let $(A,B)$ be a Cartan pair of $X$ and $W$ a neighborhood of $A\cap B$. 
Then there is a neighborhood $\Ueul \subset \cG(W)$ of the identity section and neighborhoods $A'\supset A$ and $B'\supset B$ such that for every $c \in \Ueul$, 
there are 
sections $a \in \cG(A')$ and $b \in \cG(B')$ with $ca=b$ on $A'\cap B'$. Moreover, $a$ and $b$ may be chosen 
to approximate the identity section as $c$ approximates the identity section.
\end{corollary}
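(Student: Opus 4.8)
The plan is to encode the multiplicative datum $c$ as a transition map of the coherent sheaf $\cF$, to invoke Theorem~\ref{splitting} for $\cF$, and to translate the resulting splitting back into $\cG$ through the exponential map. Concretely, I would first attach to a section $c \in \cG(W)$ close to the identity a transition map of $\cF$. After shrinking $W$ and fixing a sufficiently small neighborhood $V \subset E$ of $A \cap B$ lying over $W$, I set
$$\gamma_c \colon V \to E, \qquad \gamma_c(v) = \log\bigl(c(p(v)) \cdot \exp(v)\bigr),$$
where the product is taken in the fiber $P_{p(v)}$ and $\log$ is the fiberwise logarithm. For $c$ near the identity section and $v$ near the zero section the point $c(p(v)) \cdot \exp(v)$ lies in the neighborhood $U \subset P$ of the identity section on which $\log$ is defined, so $\gamma_c$ is a well-defined holomorphic map.

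The crucial step is to check that $\gamma_c$ is a transition map of $\cF$ in the sense of Definition~\ref{transition map}. Let $\delta$ be a local section of $\cF$ with values in $V$. By hypothesis $\exp \circ \delta$ is a local section of $\cG$, and since $\cG$ is a sheaf of groups and $c$ is a section of it, the product $c \cdot (\exp \circ \delta)$ is again a local section of $\cG$. For $c$ and $\delta$ small this product takes values in $U$, so the assumed bijection between sections of $\cG$ with values in $U$ and sections of $\cF$ with values in $\log(U)$ shows that $\log\bigl(c \cdot (\exp \circ \delta)\bigr) = \gamma_c \circ \delta$ is a local section of $\cF$. Hence $\gamma_c$ preserves $\cF$. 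Note also that the transition map attached to the identity section is $\Id_V$ and that $c \mapsto \gamma_c$ is continuous, so $\gamma_c$ approximates the identity as $c$ approximates the identity section.

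It then remains to apply the splitting lemma and unwind. Theorem~\ref{splitting}, applied to $\cF$, the Cartan pair $(A,B)$ and the neighborhood $V$, would furnish neighborhoods $A' \supset A$ and $B' \supset B$, independent of $c$, such that whenever $\gamma_c$ is sufficiently close to the identity there are $\alpha \in \cF(A')$ and $\beta \in \cF(B')$ with $\gamma_c \circ \alpha = \beta$ on $A' \cap B'$, both approximating the zero section as $\gamma_c$ approximates the identity. Since this equation only makes sense where $\gamma_c$ is defined, $A' \cap B'$ lies over $W$, so $c$ is available there. Putting $a = \exp \circ \alpha \in \cG(A')$ and $b = \exp \circ \beta \in \cG(B')$, the relation $\gamma_c \circ \alpha = \beta$ reads $\log(c \cdot a) = \log b$, that is $ca = b$ on $A' \cap B'$. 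I would then let $\Ueul$ be the set of $c \in \cG(W)$ for which $\gamma_c$ is defined on $V$ and close enough to the identity for the above to apply; by continuity of $c \mapsto \gamma_c$ and the equality $\gamma_c = \Id_V$ at the identity section, $\Ueul$ is a neighborhood of the identity section, and the approximation of $a$ and $b$ follows from that of $\alpha$ and $\beta$ together with the continuity of $\exp$.

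The main obstacle is the crucial step above, namely verifying that $\gamma_c$ really is a transition map of $\cF$: this rests on the closure of the sheaf of groups $\cG$ under left multiplication by $c$ and on the $\cG$--$\cF$ correspondence furnished by $\exp$ and $\log$. The remaining work is bookkeeping, ensuring that $V$ can be chosen small and $\Ueul$ tight enough that $\gamma_c$ is simultaneously defined on a fixed $V$, lands in the domain of $\log$, and stays uniformly close to the identity for every $c \in \Ueul$.
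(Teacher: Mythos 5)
Your proposal is correct and follows essentially the same route as the paper: both define the transition map $\gamma_c(e)=\log\bigl(c(p(e))\cdot\exp(e)\bigr)$, verify via the $\cG$--$\cF$ correspondence under $\exp$ and $\log$ that it preserves $\cF$, apply Theorem~\ref{splitting}, and push the resulting splitting back to $\cG$ by composing with $\exp$. The only difference is that you spell out the verification that $\gamma_c$ is a transition map, which the paper dismisses as following directly from the assumptions.
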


\begin{proof}
For a sufficiently small neighborhood $U\subset E$ of $A\cap B$ and a sufficiently small neighborhood 
$\Ueul \subset \cG(W)$ of the identity section, the map $\gamma=\gamma_c$
$$\gamma:U \to E, \ \ \gamma(e)=\log(c\circ p(e) \cdot \exp(e))$$ 
is well-defined for every $c \in \Ueul$. It follows directly from our assumptions that $\gamma$ is a transition map for 
$\cF$ defined in a neighborhood of $A\cap B$. Since $\gamma=\gamma_c$ approximates the identity as $c$ approximates the identity section 
of $\cG$, after possibly shrinking the neighborhood $\Ueul$ suitably, Theorem~\ref{splitting} yields $A'\supset A$, $B'\supset B$ such that if $c \in \Ueul$ there 
is a solution $\alpha \in \cF(A')$, $\beta \in \cF(B')$ to the splitting problem
$$\gamma \circ \alpha =\beta$$ on $A' \cap B'$. Moreover, $\alpha$ and $\beta$ may be chosen to 
approximate the zero section as $c$ and therefore $\gamma$ approximates the identity.
By the trivial fact that $c \circ p \circ \alpha=c$ on $A'\cap B'$ we get 
$\log(c \exp \circ \alpha)= \gamma \circ \alpha=\beta$ on $A'\cap B'$. By composing with $\exp$ on both sides of the equation we get 
for $a=\exp\circ \alpha$ and $b=\exp \circ \beta$ the equation 
$c a=b$ on $A'\cap B'$, and $a,b$ approximate the identity section as $c$ approximates the identity section. 
This finishes the proof.
\end{proof}

\begin{remark}
The given proof of Corollary~\ref{group splitting} is more general than the techniques used by Forster and Ramspott 
since it is independent of the Lie algebra structure of the fibers of $E$. 
\end{remark}

\subsection{Splitting with interpolation for sections of elliptic submersions}
In this subsection we discuss briefly an alternative proof of the 
natural generalization of Cartan's extension theorem to sections of elliptic 
submersions due to Forstneri\v c and Prezelj~\cite{FP}, which depends on a special case of Theorem~\ref{splitting}. 
To this end, let us recall some notions. 
Let $h:Z \to X$ be a holomorphic submersion, let 
$\pi:E\to Z$ be a vector bundle over $Z$ and identify $Z$ with the image of the zero section $Z\to E$. 
A \textit{spray} over $h$ with domain $E$ is a map $s:E \to Z$ whose restriction to $Z$ is the identity and 
which preserves the fibers of $h$ in the sense that $h \circ s=h \circ \pi$. 
A spray $s:E \to Z$ over $h$ is called \textit{dominating} if the restriction of the differential $ds$ to 
the zero section is onto $\ker dh\subset TZ$. Finally, a submersion $h:Z \to X$ is called 
\textit{elliptic} if every point $p \in X$ has a neighborhood $U$ such that the restricted submersion $h: h^{-1}(U)\to U$ has 
a dominating spray. A beautiful instance of an Oka principle is the natural 
generalization of Cartan's extension theorem from sections of the projection $X \times \C \to X$ to sections of an arbitrary 
elliptic submersion.

\begin{theorem}[Forstneri\v c, Prezelj~\cite{FP}]
Let $h:Z\to X$ be an elliptic submersion onto a reduced second countable Stein base $X$ and let $X'\subset X$ be an analytic 
subvariety. Then a holomorphic section $s:X' \to h^{-1}(X')$ of $h|X'$ extends to a holomorphic section $X \to Z$ of $h$ if and 
only if $s$ extends to a continuous section $X \to Z$ of $h$.
\end{theorem}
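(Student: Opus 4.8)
The ``only if'' direction is immediate, since holomorphic sections are continuous, so the content is the converse: given a continuous section $F_0:X\to Z$ of $h$ with $F_0|X'=s$, I must produce a holomorphic section that still equals $s$ on $X'$. The plan is to run the standard inductive gluing scheme of Oka theory, the novelty being that every gluing step is carried out by Theorem~\ref{splitting} applied to the coherent sheaf of spray parameters that vanish on $X'$; this sheaf is precisely what encodes the interpolation condition, and its being coherent rather than locally free is what forces the use of the present splitting lemma in place of the classical one.

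First I would perform two localization steps. Exhaust $X$ by an increasing sequence of Stein compacts and refine it to a sequence of Cartan pairs $(A,B)$ in which the new piece $B$ is a small cell contained in a coordinate patch over which $h$ admits a dominating spray $s_B:E\to h^{-1}(B)$. Next, using that a holomorphic section over the subvariety extends holomorphically to a neighborhood (locally $h$ is a projection, so this is Cartan's extension theorem), I would deform $F_0$ to a continuous section that is holomorphic on a neighborhood of $X'$ and still equals $s$ there. This yields the base case: a section holomorphic near $K_0\cup X'$ and interpolating $s$.

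For the inductive step, suppose $F_A$ is holomorphic on a neighborhood of $A$ and interpolates $s$. Over the cell $B$ the dominating spray furnishes, via the implicit function theorem and Oka--Weil approximation, a holomorphic section $F_B$ over a neighborhood of $B$ approximating $F_A$ on $A\cap B$ and agreeing with $s$ on $B\cap X'$. Since $F_A$ and $F_B$ are close on $A\cap B$ and both coincide with $s$ along $X'$, the spray $s_B$ records their discrepancy as a transition map $\gamma:U\to E$ of the coherent subsheaf $\cF\subset\cE$ of holomorphic sections of the pulled-back spray bundle that vanish on $X'$; here $\cF=\cI_{X'}\cE$ is coherent because the ideal sheaf $\cI_{X'}$ is. This $\gamma$ is close to the identity and fixes the zero section over $X'$, hence preserves $\cF$. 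Theorem~\ref{splitting} then produces $\alpha\in\cF(A')$ and $\beta\in\cF(B')$ with $\gamma\circ\alpha=\beta$ on $A'\cap B'$, both close to the zero section; feeding $\alpha$ and $\beta$ back through the spray corrects $F_A$ and $F_B$ to a single holomorphic section on $A'\cup B'$, which still interpolates $s$ because $\alpha$ and $\beta$ vanish on $X'$.

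Iterating over the exhaustion and keeping the corrections small at each stage, using the approximation clause of Theorem~\ref{splitting}, yields a sequence of holomorphic sections converging uniformly on compacts to a global holomorphic section of $h$ equal to $s$ on $X'$. The main obstacle is not the gluing itself, which Theorem~\ref{splitting} delivers once interpolation is packaged into $\cI_{X'}\cE$, but the two local steps together with the attendant bookkeeping: extending the section holomorphically across $X'$, producing over each cell a local holomorphic approximation $F_B$ that already interpolates $s$ on $B\cap X'$, and verifying that the discrepancy on $A\cap B$ genuinely defines a transition map of $\cF$ fixing the zero section over $X'$, while arranging the approximations so that the induction converges. These are exactly the points where the spray and the ellipticity hypothesis are used.
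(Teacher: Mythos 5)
The paper does not prove this theorem: it is imported from Forstneri\v c--Prezelj~\cite{FP}, and the only contribution made here is the observation that Corollary~\ref{interpolation} (Theorem~\ref{splitting} for $\cF=I_{X'}^n$) lets one dispense with the delicate treatment of $\cC$-strings in the proof of~\cite{FP}. So there is no in-paper argument to compare against, and your sketch has to stand on its own as a proof of the extension theorem.

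Your architecture is the right one --- exhaustion by Cartan pairs whose bumps sit inside spray patches, interpolation encoded in the coherent sheaf $I_{X'}\cE$, gluing via the splitting lemma with the discrepancy of $F_A$ and $F_B$ converted into a transition map through the spray and the implicit function theorem --- and you correctly identify that this is exactly where Corollary~\ref{interpolation} enters. The genuine gap is in how you use the hypothesis that $s$ extends \emph{continuously}. You consume it once, at the base case, to arrange holomorphy near $X'$; but the inductive step as you state it (``the dominating spray furnishes, via the implicit function theorem and Oka--Weil approximation, a holomorphic section $F_B$ over a neighborhood of $B$ approximating $F_A$ on $A\cap B$'') is simply not available at every stage. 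When the exhaustion by sublevel sets of a strictly plurisubharmonic function crosses a critical level, the topology of $A$ changes, $A\cap B$ need not be contractible inside $B$, and no local holomorphic $F_B$ compatible with $F_A$ exists unless one has carried along, at every stage of the induction, a homotopy from the global continuous section to the current partially holomorphic one; the topological hypothesis is spent gradually over the whole induction, not at the start. This noncritical/critical case analysis, together with the maintenance of the homotopy and of the interpolation condition through the approximation steps, is where the bulk of~\cite{FP} lives, and your sketch asserts its output rather than proving it. The same remark applies, less severely, to your appeal to ``Cartan's extension theorem'' for extending $s$ holomorphically to a neighborhood of $X'$: locally $h$ is a projection, but patching the local extensions of a section of a nonlinear fibration into a neighborhood extension is itself a nontrivial (though known) step.
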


As in most Oka principles, a suitable splitting lemma is one of the key points in the proof. The following  
corollary is a special case of Theorem~\ref{splitting} which extends Proposition~4.1 in~\cite{FP}. 

\begin{corollary}
\label{interpolation}
Let $(A,B)$ be a Cartan pair in a reduced Stein space $X$, let $W$ be an open neighborhood of $A\cap B$ and let $r>0$. 
Then there are neighborhoods $A'\supset A$, $B'\supset B$ such that for every transition map 
$\gamma: W \times \B^n_r \to W \times \C^n$ (of $\cO^n_X$) 
which approximates the identity sufficiently well and satisfies $\gamma|W\cap (X' \times \{0\})=id$, 
there are $\alpha \in \cO^n_X(A')$, $\beta \in \cO^n_X(B')$, both vanishing on $X'$, such that  
$$\gamma \circ \alpha =\beta$$ on $A'\cap B'$. 
Moreover, $\alpha$ and $\beta$ may be chosen to approximate 
the zero section as $\gamma$ approximates the identity.
\end{corollary}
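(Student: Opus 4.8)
The plan is to encode the interpolation condition into a coherent subsheaf and then to apply Theorem~\ref{splitting} off the shelf. Let $\cI_{X'}\subset \cO_X$ be the (reduced) ideal sheaf of the subvariety $X'$, which is coherent by Cartan's coherence theorem, and set
$$\cF \coloneqq \cI_{X'}\cdot \cO^n_X = \cI_{X'}^{\oplus n}\subset \cO^n_X.$$
As a finite direct sum of copies of the coherent sheaf $\cI_{X'}$, this $\cF$ is a coherent subsheaf of the sheaf of sections of $E=X\times \C^n$, and its sections over an open $V$ are exactly those $\cO^n_X$-sections whose components vanish on $X'\cap V$ --- precisely the vanishing condition demanded of $\alpha$ and $\beta$. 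Since $(A,B)$ is a Cartan pair and $U\coloneqq W\times \B^n_r$ is a neighborhood of $A\cap B$ in $E$ (identifying $X$ with the zero section), the geometric hypotheses of Theorem~\ref{splitting} are already in place.

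The step I expect to carry the weight, and the only place where the interpolation hypothesis on $\gamma$ is used, is the verification that $\gamma$ is a transition map of the smaller sheaf $\cF$ and not merely of $\cO^n_X$. I would argue as follows. Let $\delta$ be a local section of $\cF$ over some $V$ with values in $U$, written $\delta(x)=(x,\delta_2(x))$. Because a transition map of $\cO^n_X$ preserves the fibers of $E$, the composite $\gamma\circ\delta(x)=(x,\gamma_x(\delta_2(x)))$ is again a holomorphic section. For $x\in X'\cap V$ the section $\delta$ vanishes, so $\delta_2(x)=0$, and as $\delta$ takes values in $W\times \B^n_r$ we have $x\in W$; the hypothesis $\gamma|_{W\cap(X'\times\{0\})}=\Id$ then yields $\gamma\circ\delta(x)=\gamma(x,0)=(x,0)$. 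Thus $\gamma\circ\delta$ vanishes on $X'\cap V$, and since $X'$ carries its reduced structure this set-theoretic vanishing means the germ of $\gamma\circ\delta$ at each point lies in $\cI_{X'}^{\oplus n}=\cF$. Hence $\gamma\circ\delta$ is a section of $\cF$, so $\gamma$ is a transition map of $\cF$.

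With this in hand, the conclusion is a direct specialization of the already-proved splitting lemma. Applying Theorem~\ref{splitting} to the coherent sheaf $\cF$, the Cartan pair $(A,B)$, and the neighborhood $U=W\times \B^n_r$ of $A\cap B$ produces neighborhoods $A'\supset A$ and $B'\supset B$ (chosen before, and uniformly in, $\gamma$) such that every transition map $\gamma$ of $\cF$ sufficiently close to the identity admits $\alpha\in\cF(A')$ and $\beta\in\cF(B')$ with $\gamma\circ\alpha=\beta$ on $A'\cap B'$, and with $\alpha,\beta$ approximating the zero section as $\gamma$ approximates the identity. Since $\cF(A')\subset\cO^n_X(A')$ consists of sections vanishing on $X'$, and likewise over $B'$, these $\alpha$ and $\beta$ are exactly the sections claimed, and the approximation statement is inherited verbatim. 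Everything outside the middle paragraph is formal, so I anticipate no further obstacle beyond making the transition-map verification airtight.
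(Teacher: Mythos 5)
Your proposal is correct and follows the same route as the paper: identify $\cF=\cI_{X'}^{\,n}\subset\cO^n_X$ as the relevant coherent subsheaf, observe that the hypothesis $\gamma|W\cap(X'\times\{0\})=\mathrm{id}$ makes $\gamma$ a transition map of $\cF$, and invoke Theorem~\ref{splitting}. The paper states the transition-map verification in one line; you spell it out (correctly using that $X'$ is reduced so that set-theoretic vanishing puts the germ in $\cI_{X'}$), but the argument is the same.
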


\begin{proof}
Since $\gamma|W\cap (X' \times \{0\})$ is equal to the identity, $\gamma$ is in particular a transition map of the sheaf $I^n_{X'}\subset \cO^n_X$, 
where $I_{X'}\subset \cO_X$ denotes the ideal sheaf of $X'$. Therefore the desired statement is precisely Theorem~\ref{splitting} in the 
special case $\cF=I^n_{X'}$ and $E=X\times \C^n$. 
\end{proof}

In the proof given in~\cite{FP} of the above extension theorem, it has been shown that -- by taking extra-care when dealing with so-called $\cC$-strings -- 
a weakening of the above corollary to the situation where the Cartan pair $(A,B)$ and the subvariety $X'$ of $X$ satisfy 
$X'\cap (A\cap B)\subset (A\cup B)^\circ$ is sufficient for the proof. 
Such extra-care when dealing with $\cC$-strings is no longer needed with the given corollary at hand. Now, one can simply work 
with sufficiently fine $\cC$-covers of $X$, which are well-known to exist. 

\subsection{An unclarified question}
From the viewpoint of Oka theory it is an obvious question if Corollary~\ref{interpolation} allows a similar simplification 
of the proof of extension theorems in settings with Oka (instead of elliptic) fibers. To specify this more carefully 
let us recall the notion of an Oka manifold. A complex manifold $Y$ is called \textit{Oka} if any
holomorphic map from an open neighborhood of a compact convex set $K\subset \C^n$ to $Y$ can be uniformly approximated on 
$K$ by holomorphic maps $\C^n \to Y$. It is known that every elliptic manifold (that is a manifold which admits a dominating spray) 
is an Oka manifold, whereas the converse is open.
The following generalizes Cartan's extension theorem from $Y=\C$ to arbitrary Oka manifolds. 

\begin{theorem}[Forstneri\v c \cite{Interpolation for Oka}]
Let $X$ be a reduced Stein space, $X'$ an analytic subvariety and $Y$ an Oka manifold. 
Then a holomorphic map $f:X' \to Y$ extends holomorphically to $X$ if and only if $f$ extends continuously to $X$.
\end{theorem}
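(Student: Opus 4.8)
The plan is to prove the nontrivial ``if'' direction, since the ``only if'' direction is immediate: a holomorphic extension is in particular continuous. So suppose $f:X'\to Y$ extends to a continuous map $F:X\to Y$, and seek a holomorphic extension of $f$ to all of $X$. I would follow the standard inductive gluing scheme of Oka theory, carried out along an exhaustion of the Stein space $X$, with the Oka property of $Y$ supplying the local holomorphic maps and the splitting lemma supplying the gluing.

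First I would fix a smooth strictly plurisubharmonic exhaustion function $\rho:X\to\R$, arranged (after a small perturbation) to have no critical points on $X'$, and consider the sublevel sets $X_c=\{\rho\le c\}$. The goal is to construct, by induction on $c$ increasing to $+\infty$, maps that are holomorphic on larger and larger $X_c$, agree with $f$ on $X'$, and stay uniformly close to the previously constructed map. Controlling the closeness at each stage by a summable sequence guarantees that the sequence converges uniformly on compacts to a holomorphic limit $X\to Y$ that still interpolates $f$ on $X'$, which is the desired extension.

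The induction divides into noncritical and critical steps. Across a noncritical interval $[c,c']$ containing no critical value of $\rho$, the increment is realized by a Cartan pair $(A,B)$ with $A=X_c$ and $B$ a small convex bump attached near $\partial X_c$. I would first use the Oka property of $Y$ (equivalently, its convex approximation property) to produce on a neighborhood of $B$ a holomorphic map $\beta$ that approximates $F$ and interpolates $f$ on $X'\cap B$; then $\beta$ is close, on the overlap $A\cap B$, to the already-holomorphic map $\alpha$ coming from the previous stage. To glue $\alpha$ and $\beta$ into a single holomorphic map on a neighborhood of $A\cup B$, I would work in local holomorphic charts of $Y$: where the two close maps land in a common chart, their comparison becomes a small $\C^n$-valued section, and Corollary~\ref{interpolation}, the interpolation form of the splitting lemma applied to the ideal sheaf $I_{X'}$, provides corrections $\alpha\mapsto\alpha'$ and $\beta\mapsto\beta'$, both vanishing on $X'$, that match on $A\cap B$. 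This yields the glued holomorphic extension over $A\cup B$, still interpolating $f$.

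Across a critical interval the sublevel set changes topology by attachment of a handle, and here the continuous extension $F$ enters essentially rather than cosmetically: the hypothesis that $f$ extends continuously is exactly what allows the holomorphic map to be carried over the core of the handle. I would thicken the core cell to a totally real set on which $F$ is defined, approximate $F$ there by a holomorphic map using a Mergelyan-type theorem together with the Oka property, and then fold this into the noncritical machinery. The main obstacle I expect is precisely this critical step --- reconciling the purely topological input carried by $F$ with the holomorphic approximation while maintaining the interpolation condition on $X'$ --- together with the bookkeeping needed to keep the nonlinear gluing uniform across infinitely many stages so that the limit exists and is holomorphic.
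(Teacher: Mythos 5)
You should first be aware that the paper does not prove this statement at all: it is quoted as a known theorem of Forstneri\v c with a citation to \cite{Interpolation for Oka}, and the surrounding discussion in fact explains why the author does \emph{not} expect the splitting machinery of this paper to yield a simplified proof of it. So there is no in-paper proof to compare against; what can be assessed is whether your sketch is viable on its own terms.

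The decisive gap is in your noncritical step, where you propose to ``use the Oka property of $Y$ (equivalently, its convex approximation property) to produce on a neighborhood of $B$ a holomorphic map $\beta$ that approximates $F$ and interpolates $f$ on $X'\cap B$.'' The Oka property as defined here (CAP) only provides approximation of maps on compact convex sets in $\C^n$; it says nothing about interpolation along a subvariety. Producing local holomorphic approximants that in addition agree with $f$ on $X'$ is precisely an Oka--Weil theorem \emph{with interpolation} for $Y$-valued maps, and the paper explicitly identifies the absence of such a statement as the obstacle to a proof along these lines: ``A new proof using Corollary~\ref{interpolation} seems to depend on an extension of the appropriate version of the Oka-Weil theorem for Oka manifold-valued maps \dots\ to a result which includes interpolation on a subvariety.'' Your sketch assumes away exactly this point; that CAP implies the various approximation-with-jet-interpolation properties is a substantive theorem of Forstneri\v c, not a formal consequence of the definition, and it is essentially of the same depth as the statement you are trying to prove. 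A secondary problem is the gluing step: two uniformly close holomorphic maps $\alpha,\beta$ on $A\cap B$ with values in $Y$ need not land in a single holomorphic chart of $Y$, since their common image may wander over a large part of $Y$; the standard remedy is to write $\beta$ as $\gamma$ composed with the graph of $\alpha$ using a dominating fiber-spray over a Stein neighborhood of the graph in $X\times Y$, and only then is one in a position to apply a splitting lemma such as Corollary~\ref{interpolation} to the resulting transition map $\gamma$. The critical-step discussion is standard in outline, but as written the argument does not close.
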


Our impression is that the proof of Forstneri\v c's entension theorem cannot be significantly simplified by Corollary~\ref{interpolation}. 
A new proof using Corollary~\ref{interpolation} seems to depend on an extension of the appropriate 
version of the Oka-Weil theorem for Oka manifold-valued maps -- an important ingredient in the proof -- to a result which includes interpolation on a subvariety.

\appendix

\section{The parametric versions of Theorem~\ref{lifting} and~\ref{splitting}}
Parametric extensions of Theorem~\ref{lifting} and~\ref{splitting} were neglected in the main section since 
their proofs distract from the analytic key difficulties. However, we include them here since 
such extensions are often necessary in the proof of an Oka principle.

\begin{definition}
\label{parametric transition map}
Let $P$ be a compact Hausdorff space. A \textit{parametric transition map} $\gamma_q:U \to E$, $q \in P$, for a given sheaf $\cF$ is a family 
of transition maps (see Definition~\ref{transition map}) which varies continuously in the parameter $q \in P$, that is, 
$P \times U \to E, \ (q,e) \mapsto \gamma_q(e)$ defines a continuous map.
\end{definition}

\begin{remark}
The parametric map $\gamma_q: U \to E$, $q \in P$ is continuous in the sense of Definition~\ref{parametric transition map}
if and only if $q \mapsto \gamma_q$ defines a continuous map from $P$ to the space of continuous maps $U \to E$ equipped with 
the compact open topology. This is well-known and depends only on the fact that $U$ is a locally compact Hausdorff space.
\end{remark}

The only necessary ingredient to extend our results to parametric versions is the following lemma due to Cartan.

\begin{lemma}
\label{Cartans lemma}
Let $\pi: E \to F$ be a surjective linear map of Fr\'echet spaces, let $Q\subset P$ be an inclusion of compact Hausdorff spaces and 
let $f:P \to F$ and $e: Q \to E$ be continuous maps with $\pi \circ e=f|Q$. Then $e$ extends to a continuous map $g:P \to E$ with 
$\pi \circ g=f$.
\end{lemma}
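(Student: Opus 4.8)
The plan is to recast the extension problem as a continuous selection problem for the fibers of $\pi$ and to solve it with Michael's selection theorem. Write $N=\ker\pi$, a closed subspace of $E$ and hence itself a Fr\'echet space, and define the set-valued map
$$\Phi: P \to 2^E, \qquad \Phi(q)=\pi^{-1}(f(q)).$$
Since $\pi$ is surjective each $\Phi(q)$ is nonempty, and being a coset of $N$ it is closed and convex. A continuous selection $g$ of $\Phi$ is exactly a continuous map $P\to E$ with $\pi\circ g=f$, so the task becomes to produce such a selection which in addition satisfies $g|Q=e$.

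First I would record that $\pi$ is an open map: this is the open mapping theorem for surjective continuous linear maps of Fr\'echet spaces, already invoked in the proof of Theorem~\ref{lifting}. Openness of $\pi$ together with continuity of $f$ gives that $\Phi$ is lower semicontinuous; indeed, if $V\subset E$ is open and $x\in\Phi(q)\cap V$, then $\pi(V)$ is an open neighborhood of $f(q)$, so $f^{-1}(\pi(V))$ is a neighborhood of $q$ on which $\Phi$ still meets $V$. To build in the boundary condition I would then pass to the modified map $\Phi'$ with $\Phi'(q)=\{e(q)\}$ for $q\in Q$ and $\Phi'(q)=\Phi(q)$ for $q\in P\setminus Q$. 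Its values remain nonempty, closed and convex (singletons are convex), and the compatibility $\pi\circ e=f|Q$ guarantees $e(q)\in\Phi(q)$, which is precisely what is needed to check that $\Phi'$ is still lower semicontinuous at points of $Q$: near such a point the values over $Q$ vary continuously by continuity of $e$, while the values over $P\setminus Q$ meet any given neighborhood of $e(q)$ by lower semicontinuity of $\Phi$.

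With $\Phi'$ lower semicontinuous and closed- and convex-valued, I would invoke Michael's selection theorem. The domain $P$ is compact Hausdorff, hence paracompact, and the range $E$ is a Fr\'echet space, so the theorem applies and produces a continuous selection $g:P\to E$ of $\Phi'$. By construction $\pi\circ g=f$ on all of $P$ and $g=e$ on $Q$, which is the assertion.

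I expect the main obstacle to be the lower semicontinuity bookkeeping for $\Phi'$ at the boundary of $Q$, where the values collapse from cosets of $N$ to single points; this is exactly the step that uses both the continuity of $e$ and the relation $\pi\circ e=f|Q$. A secondary technical point is to make sure the version of Michael's theorem being cited covers a Fr\'echet-space target over a merely paracompact (rather than metrizable) domain -- both are standard, but worth pinning to an explicit reference. As an alternative route that keeps the linear structure more visible, one could instead choose a continuous right inverse $\sigma:F\to E$ of $\pi$ (Bartle--Graves), reduce to extending the $N$-valued map $e-(\sigma\circ f)|Q$ from $Q$ to $P$, and note that this extension problem is solved by the same selection input; I would nevertheless present the selection argument as the main line, since it dispatches the existence of $g$ and the boundary condition in a single step.
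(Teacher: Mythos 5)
Your argument is correct, but it is genuinely different from what the paper does: the paper offers no proof at all for this lemma and simply refers to the Appendice of Cartan's \emph{Espaces fibr\'es analytiques}, where the statement is established by a direct functional-analytic construction. Your route packages the whole problem into Michael's convex-valued selection theorem: the fiber map $\Phi(q)=\pi^{-1}(f(q))$ has nonempty closed convex values (cosets of the closed subspace $\ker\pi$), lower semicontinuity follows from the open mapping theorem exactly as you say, and the boundary condition is absorbed by collapsing the values over $Q$ to the singletons $\{e(q)\}$. Your lower semicontinuity check at points of $Q$ is the right one and is complete; note that it tacitly uses that $Q$ is closed in $P$ (automatic, since $Q$ is compact and $P$ is Hausdorff), so that points of $P\setminus Q$ cause no trouble. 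The one item you correctly flag as needing a precise citation is the version of Michael's theorem with a Fr\'echet (rather than Banach) target over a paracompact domain; this is covered by Theorem 3.2$''$ of E.~Michael, \emph{Continuous selections.~I}, Ann.\ of Math.\ 63 (1956), and by the treatment in Repov\v s--Semenov, so the gap is purely bibliographical. What your approach buys is a short, conceptually uniform proof that yields the relative version (extension from $Q$) and the absolute version (Bartle--Graves right inverse) in one stroke; what Cartan's original argument buys is independence from selection theory, at the cost of a more hands-on construction. Either is acceptable here, since the lemma is only used as a black box in the parametric appendix.
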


\begin{proof}
See~\cite{Cartan}, Appendice.
\end{proof}

\begin{Theorem 1 with parameters}
\label{parametric lifting}
Let $p:E \to W$ be a vector bundle over a reduced Stein base, let $f_1, \ldots, f_m:W \to E$ be holomorphic sections and set 
$$f:W \times \C^m \to E, \ f(x,z_1, \ldots, z_m)=z_1f_1(x)+\cdots +z_mf_m(x).$$
Moreover let $r>0$ and let $U\subset E$ be an open neighborhood of $f(W \times \B^m_r)$ and let $P$ be a compact Hausdorff space. 
Then, for every parametric transition map $\gamma_q:U \to E$, $q \in P$, of $\cF=\cO_W f_1 + \cdots +\cO_W f_m$, 
there is a parametric transition map $\tilde \gamma_q: W \times \B^m_r \to W \times \C^m$, $q \in P$, with $f \circ \tilde \gamma_q=\gamma_q \circ f|\B^m_r \times W$ for 
every $q \in P$. If $Q\subset P$ is closed and $\gamma_q$ equals the identity for every $q\in Q$, then $\tilde \gamma_q$ can be chosen to be the identity for every $q \in Q$ as well. 
Moreover $\tilde \gamma_q$ can be chosen to approximate the identity as $\gamma_q$ approximates the identity, uniformly in $q \in P$. 
\end{Theorem 1 with parameters}

\begin{proof}
For $D\coloneqq W \times \B^m_r$ our assumptions imply immediately that $q \mapsto \gamma_q \circ f|D$ defines a continuous map $P \to \cF_m(D)$, where $$\cF=\cO_Wf_1+\cdots +\cO_Wf_m.$$ As we reasoned already in the proof of the non-parametric version, $f_1, \ldots, f_m$ 
generate $\cF_m$ as a sheaf of $\cO_{W \times \C^m}$-modules by Lemma~\ref{generators}, and by Lemma~\ref{Theorem B} 
$$\pi: \cO^m(D) \to \cF_m(D), \ (h_1,\ldots, h_m) \mapsto h_1f_1+\cdots +h_mf_m$$ is onto. Since $\cO^m(D) \to \cF_m(D)$ is a surjective linear map of Fr\'echet spaces, 
Cartan's lemma (Lemma~\ref{Cartans lemma}) implies that there is a continuous lift 
$\tilde \gamma: P \to \cO^m(D)$ with $\pi(\tilde \gamma_q)=\gamma_q \circ f|D$ for every $q \in P$, which equals the identity for every fixed $q \in Q$. Such 
$\tilde \gamma:P \to \cO^m(D)$ is the desired lift of the parametric transition map $\gamma$ since $\pi(\tilde \gamma_q)=f \circ \tilde \gamma_q$ by definition.
To see that we can choose the lift $\tilde \gamma_q$ to approximate the identity on $D$ as $\gamma_q$ approximates the identity on $U$, uniformly in $q \in P$, let us note the following: 
the space of continuous maps $P \to \cO^m(D)$, whose values equal the identity for fixed $q \in Q$, and the space of continuous maps $P \to \cF_m(D)$, whose values 
equal the identity for fixed $q \in Q$, are both easily seen the be Fr\'echet spaces if equipped with the compact open topology. 
Moreover, the map between these two spaces obtained by composing elements $P \to \cO^m(D)$ of the former space with $\pi:\cO^m(D) \to \cF_m(D)$ defines a linear map 
of Fr\'echet spaces. The fact that this linear map of Fr\'echet spaces is onto is precisely our statement obtained from Cartan's lemma. Therefore, it follows also in this parametric 
case from the open mapping theorem for Fr\'echet spaces that $\tilde \gamma_q$ can be chosen to approximate the identity as $\gamma_q$ approximates the identity, both approximations 
uniformly in $q \in P$.
This finishes the proof.
\end{proof}

\begin{Theorem 2 with parameters}
\label{parametric splitting}
Let $p:E \to X$ be a vector bundle over a reduced Stein base and identify $X$ with the image of the zero section $X\to E$. Let $(A,B)$ be a Cartan pair in $X$ 
, $U\subset E$ a neighborhood of $A \cap B$ and let 
$\cF$ be a coherent subsheaf of the sheaf of holomorphic sections of $E$. 
Moreover let $Q\subset P$ be an inclusion of compact Hausdorff spaces. 
Then there are neighborhoods $A'\supset A$ and $B'\supset B$ such that for every parametric transition map $\gamma_q: U \to E$, $q \in P$, of $\cF$ which 
is sufficiently close to the identity for every $q \in P$ and which equals the identity for every $q \in Q$, 
there are parametric sections $\alpha:P \to  \cF(A')$ and $\beta:P \to \cF(B')$ such that $\alpha_q=\beta_q=0$ for $q \in Q$ and
$$\gamma_q \circ \alpha_q =\beta_q$$ on $A'\cap B'$ for $q \in P$. 
Moreover, $\alpha_q$ and $\beta_q$ can be chosen to approximate 
the zero sections as $\gamma_q$ approximates the identity, uniformly in $q \in P$. 
\end{Theorem 2 with parameters}

\begin{remark}
\label{parametric special case}
In the special case where $E=X\times \C^m$ and $\cF=\cO^m_X$, the sheaf of sections of $X \times \C^m \to X$, the conclusion of Theorem~\ref{splitting} with parameters is a consequence 
of an analogous parametric version of Proposition 5.8.4, page 238 in~\cite{Francs book}.
\end{remark}

\begin{proof}
We start the same way as in the proof of the non-parametric version:
let $W$ be a Stein neighborhood of $A \cap B$ which is relatively compact in $X\cap U$. 
By Cartan's theorem A there are finitely many global sections 
$f_1, \ldots, f_m \in \cF(X)$ which generate the stalk at every point of $W$, i.e. 
$\cF|W=\cO_{W}f_1+\cdots +\cO_{W}f_m$. Let 
\begin{align*}
f:X \times \C^m \to E, \ f(x,z_1,\ldots, z_m)=z_1f_1(x)+\cdots +z_mf_m(x)
\end{align*}
and $r>0$ such that $f(W \times \B^m_r) \subset U$. 
By Remark~\ref{parametric special case} there are neighborhoods $A'\supset A$ and $B' \supset B$ such that for every parametric transition map $\tilde \gamma_q: W \times \B^m_r \to W \times \C^m$, $q \in P$, which is for every $q \in P$ sufficiently close to the identity and equals the identity for every fixed $q \in Q$, there are $\tilde \alpha:P \to \cO^m_X(A')$ and $\tilde \beta:P  \to \cO^m_X(B')$ with $\tilde \gamma_q \circ \tilde \alpha_q = \tilde \beta_q$ on $A'\cap B'$ and $\tilde \alpha_q, \tilde \beta_q$ equal the zero sections for $q \in Q$. In addition 
$\tilde \alpha_q$ and $\tilde \beta_q$ can be chosen to approximate the zero sections as $\tilde \gamma_q$ approximates the identity, both approximations uniformly in $q \in P$.
It follows from Theorem~\ref{lifting} with parameters that if $\gamma_q:U \to E$, $q \in P$, is a parametric transition map of $\cF$ such that $\gamma_q$ approximates the identity 
sufficiently well uniformly in $q \in P$ and such that $\gamma_q$ equals the identity for fixed $q \in Q$, then there is a lift $\tilde \gamma_q: W \times \B^m_r \to W \times \C^m$, $q \in P$, 
of $\gamma_q:U \to E$ through $f$, which equals the identity for fixed $q \in Q$ and for which we find such a parametric splitting $(\tilde \alpha_q, \tilde \beta_q)$, $q \in P$. Moreover, the splitting $(\tilde \alpha_q, \tilde \beta_q)$ can 
be chosen such that $\alpha_q$ and $\beta_q$ approximate the zero section as $\gamma_q$ (and therefore the chosen lift $\tilde \gamma_q$) approximates the identity, all approximations  
uniformly in $q \in P$. For $\alpha_q= f \circ \tilde \alpha_q \in \cF(A')$ and $\beta_q = f \circ \tilde \beta_q \in \cF(B')$, $q \in P$, we get on $A'\cap B'$ 
\begin{align*}
\gamma_q \circ \alpha_q =\gamma_q \circ f \circ \tilde \alpha_q = f \circ \tilde \gamma_q \circ \tilde \alpha_q =f \circ \tilde \beta_q=\beta_q.
\end{align*}
Therefore $\alpha:P \to \cF(A'), \ q \mapsto \alpha_q$ and $\beta:P \to \cF(B'), \ q \mapsto \beta_q$ yield the desired splitting. This finishes the proof.
\end{proof}

\end{document}